\author{Martin Callies and Andriy Haydys\\
}
\title{Affine special K\"{a}hler structures in real dimension two}
\date{14th October 2017}
\begin{document}
\maketitle

We review properties of affine special K\"ahler structures focusing on singularities of such structures in  the simplest case of real dimension two.
We describe all possible isolated singularities and compute the monodromy of the flat symplectic connection, which is a part of a special K\"ahler structure,  near a singularity.  
Beside numerous local examples, we construct continuous families of special K\"ahler structures with isolated singularities on the projective line.

\section{Introduction}

The notion of a special K\"ahler structure has its roots in physics~\cite{C11-Gates84_SuperspaceFormulation_SpKaehler, C11-deWitVanProyen84PotentialsSymmetries} and appears in supersymmetric field theories, $\sigma$-models, and supergravity. 
The following definition of an \emph{affine} special K\"ahler structure is due to Freed~\cite{C11-Freed99_SpecialKaehler}.

\begin{definition}
Let $(M,g,I,\omega)$ be a K\"ahler manifold, where $g$ is a Riemannian metric, $I$ is a complex structure, and $\omega(\cdot,\cdot)=g(I\cdot,\cdot)$ is the correspondning symplectic form.
An \emph{affine special K\"ahler structure}\index{metric!special K\"ahler} on $M$ is a flat symplectic torsion-free connection $\nabla$ on the tangent bundle $TM$ such that 
\begin{equation}
  \label{C11-Eq_SpKaehlerCondition}
(\nabla^{}_XI)Y=\, (\nabla^{}_Y I)X
\end{equation}
holds for all vector fields $X$ and $Y$.
\end{definition}

\begin{remark}
	In this contribution, we only consider affine special K\"ahler structures, which are simply abbreviated in what follows as special K\"ahler ones.  
	These should not be confused with projective special K\"ahler structures, which are not considered in this article. 
\end{remark}

There are many reasons for a mathematician to care about special K\"ahler structures. 
Perhaps one of the most important is the so called \emph{c-map construction}~\cite{C11-CFG89_GeomOfTypeIISuperstrings,C11-Freed99_SpecialKaehler, C11-MaciaSwann15_TwistGeomCmap}, which associates to a special K\"ahler structure on $M$ a hyperK\"ahler metric on the total space of $T^*M$.
Moreover, each cotangent space is a complex Lagrangian submanifold of $T^*M$ with respect to a natural complex symplectic form on $T^*M$, i.e., $\pi\colon T^*M\to M$ is a holomorphic Lagrangian fibration.
Conversely, if $\pi\colon X\to M$ is \emph{an algebraic integrable system}~\cite[Def.\,3.1]{C11-Freed99_SpecialKaehler}, then the base $M$ carries a natural special K\"ahler structure.

Another reason to care about special K\"ahler structures is that some important moduli spaces are equipped with (or are closely related to) special K\"ahler structures.  
For example, moduli spaces of Calabi--Yau threefolds are bases of algebraic integrable systems~\cite{C11-DonagiMarkman96_CubicsIntegrSystems, C11-Cortes98_OnHKmflds} and thus carry special K\"ahler structures. 
The deformation space of a compact, complex Lagrangian submanifold of a complex K\"ahler symplectic manifold is special K\"ahler~\cite{C11-Hitchin99_ModuliSpaceCxLagr}.
The Hitchin moduli space~\cite{C11-Hitchin:87} associated with a Riemann surface $\Sigma$ is the total space of an algebraic integrable system, whose base is the space of quadratic differentials on $\Sigma$. 
Hence, the space of quadratic differentials on $\Sigma$ is equipped with a natural special K\"ahler structure.


A lot is known about properties of special K\"ahler structures (especially local ones) as well as about relations of special K\"ahler structures to other geometric structures. 
For example, an extrinsic characterisation of special K\"ahler structures has been obtained in~\cite{C11-Cortes98_OnHKmflds} (see also~\cite{C11-ACD02_SpecialComplexManifolds,C11-Cortes02_SpecialMflds_Survey} as well as Section~\ref{C11-Subsect_Extrinsic} below).  
It was shown in~\cite{C11-BauesCortes01_SpKParabolicSpheres}  that a simply connected special K\"ahler manifold can be realised as a parabolic affine hypersphere.  
More recently, it has been realised~\cite{C11-AlekseevskyEtAl15_QKmetrics,C11-MaciaSwann15_TwistGeomCmap} that the c-map construction combined with the quaternionic flip of~\cite{C11-Haydys_hk:08}  is a useful tool in studies of quaternionic K\"ahler metrics (this is in turn related to projective special K\"ahler geometry).

The Riemannian geometry of the Hitchin moduli space is now being actively studied~\cite{C11-GaiottoMooreNeitzke13_WallCrossingsHitchin, C11-MSWW16_EndsOfModuliSpace}. 
The corresponding special K\"ahler structure plays a central role in the asymptotic description of the Riemannian metric near the ends of the moduli space.    


Soon after special K\"ahler structures entered the mathematical scene, Lu~\cite{C11-Lu99_NoteSpecialKaehler} proved that there are no complete special K\"ahler metrics besides flat ones.
This motivates studying singular special K\"ahler metrics as the natural structure on bases of algebraic integrable systems with singular fibers. 

The focus of this article is on singularities of special K\"ahler metrics in the simplest case of real dimension two. 
All possible singularities of special K\"ahler metrics in two dimensions (under a mild assumption) were described in~\cite{C11-Haydys15_IsolSing_CMP}.
In this introductory article, after reviewing the basics of special K\"ahler geometry, we extend the results of ~\cite{C11-Haydys15_IsolSing_CMP} by computing the monodromy of the flat symplectic connection near an isolated singularity, see Theorem~\ref{C11-Thm_Monodromy}.
We also construct continuous families of special K\"ahler structures with isolated singularities on $\P^1$, thus showing in particular that there is a non-trivial moduli space of singular special K\"ahler structures.

An interesting question, which is outside the scope of this article, is whether the result of the c-map construction applied to a singular special K\"ahler structure can be modified to yield a smooth hyperK\"ahler metric. 
A proposal for such modification was given in~\cite{C11-GaiottoNeitzkeMoore10_FourDimWallCross, C11-Neitzke14_NotesNewConstrHK}.
We leave this question for future research.

\section{Special K\"ahler geometry in local coordinates}

\subsection{Local description in terms of special holomorphic coordinates}

Locally, a special K\"ahler structure can be conveniently described in terms of special holomorphic coordinates.
Following~\cite{C11-Freed99_SpecialKaehler},  we say that a system of holomorphic coordinates $(z^{}_1,\dots, z^{}_n)$ is \emph{special}\index{coordinate system!special holomorphic}, if $\nabla (\Real \dd z^{}_j)=0$ for all $j=\overline{1,n}$.
Two special coordinate systems $\{ z^{}_j \}$ and $\{ w^{}_k\}$ are said to be conjugate\index{coordinate system!conjugate}, if $\bigl \{ p^{}_j:=\Real z^{}_j,\  q^{}_k:=-\Real w^{}_k \bigr \}$ is a Darboux coordinate system, i.e., 
\[
\om\, =\, \sum_j \dd p^{}_j\wedge \dd q^{}_j.
\]
Such coordinate systems always exist in a neighbourhood of any point~\cite{C11-Freed99_SpecialKaehler}.
Moreover, for any $j,k$ we have 
\[
\frac{\partial w^{}_j}{\partial z^{}_k}\, =\, \frac{\partial w^{}_k}{\partial z^{}_j}
\]
and therefore there is a holomorphic function $\fF$ such that 
\[
w^{}_k\, =\, \frac{\partial\fF}{\partial z^{}_k}.
\] 
This function $\fF$, defined up to a constant, is called a \emph{holomorphic prepotential}\index{holomorphic prepotential}.
The K\"ahler form can be expressed in terms of the holomorphic prepotential as follows:
\begin{equation}
  \label{C11-Eq_KaehlerFormViaPrepot}
\om \, =\, \frac \ii 2\sum\limits_{j,k}\Imag \Bigl ( \frac{\partial^2\fF }{\partial z^{}_j\partial z^{}_k} \Bigr)\;\dd z^{}_j\wedge \dd\bar z^{}_k.
\end{equation}

One more useful object, which can be attached to a special K\"ahler structure, is the so called holomorphic cubic form\index{holomorphic cubic form}, which is defined as follows.
Consider the fiberwise projection $\pi^{(1,0)}$ onto the $T^{1,0}M\subset T_\CC M$ as a 1-form with values in $T_{\mathbb C}M$. Since this form vanishes on vectors of type $(0,1)$, we can think of $\pi^{(1,0)}$  as an element of $\Om^{1,0}(M; T_\CC M)$.
Then, the holomorphic cubic form is
\[
\Xi\, :=\, -\om \bigl (\pi^{(1,0)}, \nabla\pi^{(1,0)} \bigr )\in H^0\bigl (M; \mathrm{Sym}^3 T^*M \bigr ).
\]  
In terms of the holomorphic prepotential, the holomorphic cubic form can be expressed as follows:
\[
\Xi\, =\, \frac 14\sum\limits_{j,k,l}\frac{\partial^3\fF}{\partial z^{}_j\partial z^{}_k\partial z^{}_l}\, \dd z^{}_j\otimes \dd z^{}_k\otimes \dd z^{}_l.
\]
One can show that $\Xi$ measures the difference between the flat connection $\nabla$, which is part of the special K\"ahler structure, and the Levi--Civita connection~\cite{C11-Freed99_SpecialKaehler}.

\subsection{An extrinsic description}
  \label{C11-Subsect_Extrinsic}

In \cite{C11-ACD02_SpecialComplexManifolds}, the local description of special K\"ahler manifolds in terms of special holomorphic coordinates was reformulated as an extrinsic description of simply connected special K\"ahler manifolds: 

Given an $n$-dimensional special K\"ahler manifold, then locally, special conjugate coordinate systems $\{z_j\}$ and $\{w_j\}$ define an immersion 
\[
\phi\, =\, (z^{}_1,\ldots,z^{}_n,w^{}_1,\ldots,w^{}_n)
\] 
into $T^*\CC^n = \CC^{2n}$. Thinking of $(z^{}_1,\ldots, z^{}_n,w^{}_1,\ldots w^{}_n)$ as a canonical coordinate system on $T^*\CC^n$, the standard complex symplectic form on $T^*\CC^n$ is 
\[
\Omega\, = \sum_j \dd z^{}_j\wedge \dd w^{}_j.
\] 
The immersion $\phi$ is holomorphic and Lagrangian ($\phi^*\Omega = 0$). 

Furthermore, consider the real structure $\tau\colon\CC^{2n}\to\CC^{2n}$ and $\gamma = \ii\Omega(-,\tau -)$. Then the special K\"ahler metric is given by $g = \Real(\phi^*\gamma)$ and the K\"ahler form is $\omega = \phi^*\alpha$, where $\alpha = 2\sum_j \dd p^{}_j\wedge \dd q^{}_j$. 

In particular, any simply connected special  K\"ahler manifold $(M,I,\omega,g,\nabla)$ of dimension $ n$ admits such a holomorphic, non-degenerate (i.e., $\phi^*\gamma$ is non-degenerate) Lagrangian immersion\index{Lagrangian immersion} \cite[Thm.~4(iii)]{C11-ACD02_SpecialComplexManifolds}.

The converse also holds: Let \[\phi\colon M\to T^*\CC^n = \CC^{2n}\] be a non-degenerate holomorphic Lagrangian embedding of a $n$-dimensional complex manifold $(M,I)$ and let $g=\Real(\phi^*\gamma)$ be a K\"ahler metric. Then one can prove that $\Real(\phi)$ is also an immersion and obtain global coordinates $(p^{}_1,\ldots, p^{}_n,q^{}_1,\ldots q^{}_n)$, which induce a flat torsion-free connection on $M$. With this connection, $(M, g=\Real(\phi^*\gamma), I, \omega = \phi^*\alpha, \nabla)$ is special K\"ahler (cf. \cite[Thm.~3]{C11-ACD02_SpecialComplexManifolds}).

\begin{example}\label{C11-Ex_Extrinsic}
The basic example in this context is given by a closed holomorphic $1$-form $\vartheta = \sum_j F_j dz_j$ on $M=\CC^n$ with invertible real matrix $\Imag(\tfrac{\partial F_j}{\partial z_k})$. Then, the image of 
\[
M\overset{\vartheta}{\hookrightarrow}T^*\CC^n = \CC^{2n}
\] 
is a special K\"ahler manifold. Locally, $\vartheta = \dd\fF$ is the differential of a holomorphic function $\fF$, the holomorphic prepotential. 
\end{example}

\subsection{Local description in terms of solutions of the Kazdan--Warner equation}\label{C11-Subsect_SpKandKW}

In the simplest case of complex dimension one, the following alternative local description was obtained in~\cite{C11-Haydys15_IsolSing_CMP}.
The main advantage of this description is that it does not rely on the existence of a special holomorphic coordinate. 
This is particularly important in the case of special K\"ahler structures with singularities, since near the singularities there may be no special holomorphic coordinates which extend over the singularities, and the traditional approach becomes less helpful.

\medskip

Let $\Om\subset \CC$ be any domain, which is viewed as being equipped with a holomorphic coordinate $z=x+y\ii$ and the flat Euclidean metric $|\dd z|^2=\dd x^2+ \dd y^2$.
The coordinate $z$ does not need to be special.

Write a special K\"ahler metric $g$ on $\Om$ in the form 
\[
g\, =\, e^{-u}|\dd z|^2.
\]
Using the global trivialisation of $T\Om$ provided by the real coordinates $(x,y)$ the connection $\nabla$ is described by its connection $1$-form $\om^{}_\nabla\in\Om^1\bigl(\Om;\gl(2,\RR)\bigr)$. 
A computation shows that $\nabla$ is torsion-free and satisfies~\eqref{C11-Eq_SpKaehlerCondition} if and only if $\om^{}_\nabla$ can be written in the form
\begin{equation}
  \label{C11-Eq_Conn1Form}
\om^{}_\nabla\, = \, 
 \begin{pmatrix}
 \om^{}_{11} & -*\om^{}_{11}\\
 *\om^{}_{22} & \phantom{-* }\om^{}_{22}
 \end{pmatrix}.
\end{equation}
Here $*$ denotes the Hodge star operator with respect to the flat metric.
Moreover, $\nabla$ preserves the symplectic form $\om=2e^{-u}dx\wedge dy$ if and only if $\tr\om^{}_\nabla=\om^{}_{11} +\om^{}_{22}=-\dd u$.
Thus, $\nabla$ is parameterised by a single 1-form, say $\om^{}_{11}$.

Furthermore, by a direct computation one obtains that the flatness of $\nabla$ implies that $\eta:=e^{-u}\om_{11}$ is closed.
Hence, $\nabla$ is in fact parameterized by a single \emph{closed} 1-form $\eta$; 
Moreover, $\nabla$ is flat if and only if
\begin{equation}
 \label{C11-Eq_AuxSpKaehEta}
  \begin{aligned}
  *d*\eta\, & =\, 2*(*\eta\wedge \dd u)- 2e^u |\eta|^2,\\
  \Delta u\, & =\, |2\eta + e^{-u} \dd u|^2 e^{2u}.
  \end{aligned}
\end{equation}
Here $\Delta =\partial^2_{xx} +\partial^2_{yy}$ is the Laplacian with respect to the flat metric.

Assume that any class in $H^1(\Om;\RR)$ can be represented by a harmonic 1-form. 
Then, choosing a harmonic representative $\psi$ of $[\eta]$, we can write $\eta=\psi +\tfrac 12 \dd (h + e^{-u})$.
A computation shows that~\eqref{C11-Eq_AuxSpKaehEta} is equivalent to
\begin{equation}
  \label{C11-Eq_SpKaehlerPsi}
 \Delta h=0,\qquad \Delta u= | \dd h + 2\psi |^2 e^{2u}.
\end{equation}
Hence, we obtain the following proposition, which is a slight generalisation of~\cite[Cor. 2.3]{C11-Haydys15_IsolSing_CMP}.

\begin{proposition}
For any solution\/ $(h, u,\psi)\in C^\infty(\Om)\times C^\infty(\Om)\times\Om^1(\Om)$ of
\begin{equation}
  \label{C11-Eq_SpKaehlerMostGeneral}
\Delta h\, =\, 0,\qquad \Delta\psi\, =\, 0,\qquad \Delta u\, =\, | \dd h + 2\psi |^2 e^{2u},
\end{equation}
the metric\/ $g=e^{-u}|\dd z|^2$ is special K\"ahler. 
Moreover, the connection $1$-form of the flat connection $\nabla$ is given by\/~\eqref{C11-Eq_Conn1Form} with
\begin{equation}
  \label{C11-Eq_CoeffConnectForm}
2\om^{}_{11}=  e^u(\dd h +\psi)-\dd u,\qquad
2\om^{}_{22}=  -e^u(\dd h +\psi)-\dd u.
\end{equation} 

Conversely, if any de Rham cohomology class in\/ $H^1(\Om;\RR)$ can be represented by a harmonic $1$-form, then any special K\"ahler structure on\/ $\Om$ yields a solution of Eq.~\eqref{C11-Eq_SpKaehlerMostGeneral}.\qed
\end{proposition}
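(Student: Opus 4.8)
The plan is to obtain both directions by matching the data to the two equivalences already established in the text; the only point requiring care is the dictionary between $\om^{}_{11}$ in \eqref{C11-Eq_Conn1Form}, the closed $1$-form $\eta:=e^{-u}\om^{}_{11}$, and the pair $(h,\psi)$.

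\emph{Solution $\Rightarrow$ special K\"ahler.} Suppose $(h,u,\psi)$ solves \eqref{C11-Eq_SpKaehlerMostGeneral}. Since $e^{-u}>0$, the expression $g=e^{-u}|\dd z|^2$ is a Riemannian metric; being a positive multiple of the flat Hermitian metric it is compatible with the standard complex structure $I$, and as every $2$-form on the surface $\Om$ is closed, $(\Om,g,I,\om)$ with $\om=2e^{-u}\,\dd x\wedge\dd y$ is K\"ahler. Define $\om^{}_{11},\om^{}_{22}$ by \eqref{C11-Eq_CoeffConnectForm} and let $\nabla$ have connection $1$-form \eqref{C11-Eq_Conn1Form} in the frame $(\partial^{}_x,\partial^{}_y)$. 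By the structural discussion preceding the proposition such a $\nabla$ is automatically torsion-free and satisfies \eqref{C11-Eq_SpKaehlerCondition}; summing the two lines of \eqref{C11-Eq_CoeffConnectForm} gives $\om^{}_{11}+\om^{}_{22}=-\dd u=\tr\om^{}_\nabla$, so $\nabla$ preserves $\om$. The one remaining point is flatness: from \eqref{C11-Eq_CoeffConnectForm} one reads off that $\eta:=e^{-u}\om^{}_{11}$ has the form $\psi+\tfrac12\dd(h+e^{-u})$, which is closed because $\psi$ is harmonic; hence $\nabla$ is exactly the connection attached to this $\eta$ above, its flatness is equivalent to \eqref{C11-Eq_AuxSpKaehEta}, and under this substitution \eqref{C11-Eq_AuxSpKaehEta} is equivalent to \eqref{C11-Eq_SpKaehlerPsi}. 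As $\Delta h=0$ and $\Delta\psi=0$, the system \eqref{C11-Eq_SpKaehlerMostGeneral} reduces to \eqref{C11-Eq_SpKaehlerPsi}, so $\nabla$ is flat and $(\Om,g,I,\om,\nabla)$ is special K\"ahler.

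\emph{Converse.} Run the preceding derivation backwards. Given a special K\"ahler structure, write $g=e^{-u}|\dd z|^2$ (possible since $g$ is Hermitian for $I$); torsion-freeness together with \eqref{C11-Eq_SpKaehlerCondition} forces the connection $1$-form into the shape \eqref{C11-Eq_Conn1Form}, $\nabla$-invariance of $\om$ gives $\om^{}_{11}+\om^{}_{22}=-\dd u$, and flatness makes $\eta:=e^{-u}\om^{}_{11}$ closed. Here the hypothesis enters: pick a harmonic representative $\psi$ of $[\eta]\in H^1(\Om;\RR)$; then $\eta-\psi$ is closed and exact, so $\eta-\psi=\tfrac12\dd\phi$ for some $\phi\in C^\infty(\Om)$, and with $h:=\phi-e^{-u}$ one has $\eta=\psi+\tfrac12\dd(h+e^{-u})$. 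Reading the equivalence \eqref{C11-Eq_AuxSpKaehEta}$\,\Leftrightarrow\,$\eqref{C11-Eq_SpKaehlerPsi} in the other direction gives $\Delta h=0$ and $\Delta u=|\dd h+2\psi|^2e^{2u}$, while $\Delta\psi=0$ because $\psi$ is harmonic; thus $(h,u,\psi)$ solves \eqref{C11-Eq_SpKaehlerMostGeneral}, and \eqref{C11-Eq_CoeffConnectForm} recovers $\om^{}_{11}$ and hence $\nabla$.

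Since both directions are essentially assembly, the main obstacle is one of consistency rather than of ideas: one must check that the connection built from \eqref{C11-Eq_Conn1Form}--\eqref{C11-Eq_CoeffConnectForm} really is the one for which ``flat $\Leftrightarrow$ \eqref{C11-Eq_AuxSpKaehEta}'' and ``\eqref{C11-Eq_AuxSpKaehEta}$\,\Leftrightarrow\,$\eqref{C11-Eq_SpKaehlerPsi}'' were verified, i.e.\ that $e^{-u}\om^{}_{11}$ comes out precisely as $\psi+\tfrac12\dd(h+e^{-u})$ with a genuinely closed, indeed harmonic, $\psi$, and that the factors of $2$ and the signs in \eqref{C11-Eq_Conn1Form}--\eqref{C11-Eq_CoeffConnectForm} agree with those of the flatness computation. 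I would also stress that the assumption that every class in $H^1(\Om;\RR)$ is represented by a harmonic $1$-form is used only in the converse, and there precisely to split $\eta$ into its harmonic part $\psi$ plus an exact remainder which can be integrated to produce $h$; without it one still has the closed form $\eta$ but not the decomposition that renders \eqref{C11-Eq_SpKaehlerPsi} meaningful.
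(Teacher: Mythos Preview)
Your proposal is correct and takes essentially the same approach as the paper: the proposition carries a \qed\ because its proof \emph{is} the discussion of Section~\ref{C11-Subsect_SpKandKW} that precedes it, and you have faithfully reorganised that discussion into the two implications, correctly isolating where the harmonic-representative hypothesis enters. One small point worth tightening: in the forward direction you assert that $\eta$ is closed ``because $\psi$ is harmonic'', but on a non-compact domain $\Delta\psi=0$ alone does not force $d\psi=0$; what is really being used (and what the paper means by ``harmonic representative'') is that $\psi$ is closed and co-closed, so you should either read $\Delta\psi=0$ in that Hodge-theoretic sense or note that closedness of $\psi$ is part of the data.
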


In particular, for the punctured disc  $B_1^*:=B_1\setminus\{ 0 \}$ the first de Rham cohomology group is generated by the following harmonic $1$-form:
\[
\varphi= \frac{y\, \dd x - x\, \dd y}{x^2 + y^2}.
\] 
Hence, we have the following result.

\begin{corollary}[{\cite[Cor.2.3]{C11-Haydys15_IsolSing_CMP}}]
\label{C11-Cor_SpKonPunctDisc}
Any triple\/ $(h,u,a)\in C^\infty(B_1^*)\times C^\infty(B_1^*)\times\RR$ satisfying
\begin{equation}
  \label{C11-Eq_SpKaehlerDisc}
\Delta h\, =\, 0,\qquad \Delta u\, =\, |\dd h + a\varphi|^2 e^{2u}
\end{equation}
determines a special K\"ahler structure on\/ $B_1^*$. 
Conversely, any special K\"ahler structure on\/ $B_1^*$ determines a solution of~\eqref{C11-Eq_SpKaehlerDisc}.\qed  
\end{corollary}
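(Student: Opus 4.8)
The plan is to obtain this statement as the special case $\Om=B_1^*$ of the preceding Proposition, so that the only substantive work is to understand the harmonic $1$-forms on the punctured disc and to match the two systems of equations.

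First I would record the relevant topological and Hodge-theoretic facts. Since $B_1^*$ deformation retracts onto a circle, $H^1(B_1^*;\RR)\cong\RR$, and I would check that $\varphi=\tfrac{y\,\dd x-x\,\dd y}{x^2+y^2}$ represents a generator by computing $\int_{S^1}\varphi=-2\pi\neq 0$, so that $[\varphi]$ is nonzero and hence spans $H^1(B_1^*;\RR)$. Next I would verify that $\varphi$ is harmonic with respect to the flat metric: writing $1/z=\tfrac{x}{x^2+y^2}-\ii\,\tfrac{y}{x^2+y^2}$, the coefficients of $\varphi$ are real and imaginary parts of a holomorphic function and are therefore harmonic; moreover $\dd\varphi=0$ (locally $\varphi=-\dd\arg z$), while $\ast\varphi=\tfrac{x\,\dd x+y\,\dd y}{x^2+y^2}=\tfrac12\,\dd\log(x^2+y^2)$ is exact, hence closed, so $\varphi$ is closed and coclosed. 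In particular every de Rham class on $B_1^*$ admits a harmonic representative, namely a real multiple of $\varphi$; this is exactly the hypothesis required by the converse part of the Proposition.

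With this in hand the forward direction is immediate: given $(h,u,a)$ satisfying~\eqref{C11-Eq_SpKaehlerDisc}, set $\psi:=\tfrac a2\varphi\in\Om^1(B_1^*)$. Then $\Delta\psi=0$, $2\psi=a\varphi$, and the third equation of~\eqref{C11-Eq_SpKaehlerMostGeneral} becomes precisely $\Delta u=|\dd h+a\varphi|^2e^{2u}$, so the Proposition yields a special K\"ahler structure on $B_1^*$, with connection $1$-form given by~\eqref{C11-Eq_Conn1Form} and~\eqref{C11-Eq_CoeffConnectForm}. For the converse, I would start from a special K\"ahler structure on $B_1^*$; by the previous paragraph the Proposition applies, and in the derivation preceding it one is free to choose the harmonic representative $\psi$ of the class $[\eta]$ (with $\eta=e^{-u}\om_{11}$) to be $\tfrac a2\varphi$, where $a\in\RR$ is the unique constant with $[\eta]=\tfrac a2[\varphi]$. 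Then $\eta-\tfrac a2\varphi$ is exact on $B_1^*$, the reduction $\eqref{C11-Eq_AuxSpKaehEta}\Leftrightarrow\eqref{C11-Eq_SpKaehlerPsi}$ goes through verbatim, and~\eqref{C11-Eq_SpKaehlerPsi} with $2\psi=a\varphi$ is exactly~\eqref{C11-Eq_SpKaehlerDisc}, producing the desired triple $(h,u,a)$.

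The step requiring the most care is the bookkeeping in the converse: one must check that the cohomological normalisation $[\eta]=\tfrac a2[\varphi]$ together with the exactness of $\eta-\tfrac a2\varphi$ on $B_1^*$ genuinely reduces the general system~\eqref{C11-Eq_SpKaehlerMostGeneral} to~\eqref{C11-Eq_SpKaehlerDisc} with no residual freedom beyond the single constant $a$ and the harmonic function $h$ (the latter pinned down only up to an additive constant, consistently with the prepotential being defined up to a constant). Everything else is a direct substitution into results already established, so I do not expect any real obstacle here.
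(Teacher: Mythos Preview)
Your proposal is correct and follows exactly the approach the paper takes: the paper simply observes that $H^1(B_1^*;\RR)$ is generated by the harmonic form $\varphi$ and then records the Corollary with a \qed, so the entire content is precisely the specialisation of the preceding Proposition via $\psi=\tfrac a2\varphi$. You have merely filled in the routine verifications (harmonicity and nontriviality of $\varphi$, the bookkeeping $2\psi=a\varphi$) that the paper leaves implicit.
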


\begin{remark}
	The last equation of~\eqref{C11-Eq_SpKaehlerMostGeneral} is the celebrated Kazdan--Warner equation\index{Kazdan--Warner equation}~\cite{C11-KazdanWarner74}.
\end{remark}

A straightforward computation shows that in the setting of Corollary~\ref{C11-Cor_SpKonPunctDisc} the holomorphic cubic form is given by
\[
\Xi\, =\, \Xi^{}_0\, \dd z^3\, =\, \frac 12\Bigl ( \frac a{2z} -\frac {\partial h}{\partial z}\ii\Bigr)\; \dd z^3.
\]
Let $N\in \ZZ$ denote the order of $\Xi$ at the origin. 
This means the following: If $N>0$, then the origin is a zero of $\Xi_0$ of multiplicity $N$; If $N<0$, then the origin is the pole of $\Xi_0$ of order $|N|$; lastly, if $N=0$, $\Xi_0$ is holomorphic at the origin and does not vanish there.
In particular, by saying that $N$ is an integer, we exclude essential singularities as well as the case $\Xi\equiv 0$, which corresponds to the flat special K\"ahler metric with $\nabla$ being the Levi--Civita connection~\cite{C11-Freed99_SpecialKaehler}.


The description of special K\"ahler metrics given in Corollary~\ref{C11-Cor_SpKonPunctDisc} can be used to prove the following result.

\begin{theorem}[{\cite[Thm.~1.1]{C11-Haydys15_IsolSing_CMP}}]
\label{C11-Thm_SingOfSpecialKaehler}
Let\/ $g=w|\dd z|^2$ be a special K\"ahler metric on\/ $B_1^*$. 
Assume that\/ $\Xi$ is holomorphic on the punctured disc and the order of\/ $\Xi$ at the origin is $N\in\ZZ$.  
Then, 
\begin{equation}\label{C11-Eq_SingOfsKmetrics}
w\, =\, -|z|^{N+1}\log |z| \bigl (C+o(1)\bigr)\qquad\text{or}\qquad w\, =\, |z|^\beta\bigl(C+o(1)\bigr )
\end{equation}
as $z\to 0$, where $C>0$ and $\beta<N+1$.

Moreover, for any $N\in\ZZ$ and $\beta\in\RR$ such that $\beta<N+1$, there is an affine special K\"ahler metric satisfying~\eqref{C11-Eq_SingOfsKmetrics}. 
(In particular, for any $N\in\mathbb Z$ there is an affine special K\"ahler metric satisfying $w=-|z|^{N+1}\log |z| (C+o(1))$.)\qed
\end{theorem}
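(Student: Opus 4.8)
The proof I have in mind rests on Corollary~\ref{C11-Cor_SpKonPunctDisc} together with an asymptotic analysis of the Kazdan--Warner equation on a half-cylinder. \emph{Reduction.} By Corollary~\ref{C11-Cor_SpKonPunctDisc}, after shrinking the disc, write $g=e^{-u}|\dd z|^{2}$ (so $w=e^{-u}$) with $\Delta u=|\dd h+a\varphi|^{2}e^{2u}$, $h$ harmonic and $a\in\RR$. From the formula $\Xi_{0}=\tfrac12(\tfrac{a}{2z}-\tfrac{\partial h}{\partial z}\ii)$ a short computation gives $|\dd h+a\varphi|^{2}=16|\Xi_{0}|^{2}$, so
\[
\Delta u=16|\Xi_{0}|^{2}e^{2u}.
\]
Since $\Xi$ is holomorphic of order $N$ at $0$, we have $\Xi_{0}(z)=z^{N}p(z)$ with $p$ holomorphic and nonvanishing near $0$; hence $16|\Xi_{0}|^{2}=|z|^{2N}V$ with $V:=16|p|^{2}>0$ and $V\to V_{0}:=16|p(0)|^{2}>0$ as $z\to0$. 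Passing to cylindrical coordinates $z=e^{t+\ii\theta}$ (so $t\to-\infty$ is $z\to0$) and setting $v:=u+(N+1)t$, the equation becomes
\[
v_{tt}+v_{\theta\theta}=V\,e^{2v}\qquad\text{on }(-\infty,t_{0})\times S^{1},
\]
where $V\to V_{0}>0$ exponentially fast as $t\to-\infty$, and $w=e^{-v}|z|^{N+1}$; thus it suffices to determine $v$ as $t\to-\infty$.

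\emph{The dichotomy.} Let $\bar v(t)$ be the average of $v$ over the circle. Averaging and using Jensen gives $\bar v''=\overline{V e^{2v}}\ge(\inf V)e^{2\bar v}\ge c_{1}e^{2\bar v}$ with $c_{1}>0$, so $\bar v$ is convex in $t$. An energy/comparison argument (for $y''\ge c_{1}e^{2y}$, comparing with $y=-\log(\sqrt{c_{1}}(t_{*}-t))$) shows that $\bar v\to+\infty$ as $t\to-\infty$ would force a singularity at a finite $t$, which is impossible since $v$ is defined on a whole left half-line, while a finite limit is excluded by $\bar v''\ge c_{1}e^{2\bar v}>0$; hence $\bar v\to-\infty$ and $\gamma:=\lim_{t\to-\infty}\bar v'(t)\ge0$. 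With $\bar v\to-\infty$ the oscillation $\tilde v:=v-\bar v$ solves a linear equation whose inhomogeneity (the oscillatory part of $V e^{2v}$) tends to $0$ and whose angular operator has spectral gap $1$; a standard exponential-decay estimate yields $\tilde v\to0$. Feeding this back, $\bar v''=V_{0}e^{2\bar v}$ up to exponentially small and $O(\|\tilde v\|^{2})$ errors, the conserved energy is $E=\lim_{t\to-\infty}\bigl((\bar v')^{2}-V_{0}e^{2\bar v}\bigr)=\gamma^{2}$, and two cases arise: if $\gamma>0$, then $v=\gamma t+\mathrm{const}+o(1)$, whence $w=e^{-v}|z|^{N+1}=|z|^{\beta}(C+o(1))$ with $\beta=N+1-\gamma<N+1$ and $C>0$; if $\gamma=0$, then $e^{-\bar v}=\sqrt{V_{0}}\,(-t)+\mathrm{const}+o(1)$, i.e. $\bar v=-\log(-t)-\tfrac12\log V_{0}+o(1)$, whence $w=-|z|^{N+1}\log|z|\,(C+o(1))$ with $C=\sqrt{V_{0}}>0$.

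\emph{The converse.} Given $N\in\ZZ$ and $\beta<N+1$, choose $h$ and $a$ so that $\Xi_{0}$ has order $N$ and $16|\Xi_{0}|^{2}=A|z|^{2N}$ is radial for some $A>0$ (for instance $h=\Real(cz^{N+1})$, $a=0$ when $N\ne-1$, and $h=0$, $a=\sqrt A$ when $N=-1$). It then suffices to solve $\Delta u=A|z|^{2N}e^{2u}$ radially; in the variables $t=\log r$, $v=u+(N+1)t$ this is the one-dimensional Liouville equation $v''=A e^{2v}$, solved by $v=-\log\phi$ with $(\phi')^{2}-\phi\phi''=A$. Taking $\phi(t)=\tfrac{\sqrt A}{b}\sinh(-bt)$ with $b=N+1-\beta>0$ (on $t<0$) gives $v'\to b$, hence by Corollary~\ref{C11-Cor_SpKonPunctDisc} a special K\"ahler metric on $B_{1}^{*}$ with $w=|z|^{\beta}(C+o(1))$; taking $\phi(t)=\sqrt A\,(-t)$ gives $v=-\log(-t)-\tfrac12\log A$, hence one with $w=-\sqrt A\,|z|^{N+1}\log|z|$. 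This realises every prescribed pair $(N,\beta)$ and every $N$ in the borderline case.

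\emph{Main obstacle.} The delicate step is the decoupling of $\tilde v$ from $\bar v$: a priori one does not even know that $\tilde v$ stays bounded as $t\to-\infty$, and making the reduction to the scalar equation rigorous requires uniform control on $\tilde v$ together with its decay. This is obtained either from a finite-total-mass estimate $\int_{B_{\rho}^{*}}\Delta u\,\dd A<\infty$ — which itself must be extracted from the equation, e.g. via a Pohozaev-type identity or a potential-theoretic bootstrap — combined with interior elliptic estimates, or from a direct a-priori argument on the cylinder exploiting the convexity of $\bar v$ and the spectral gap of the angular Laplacian. Once $\tilde v$ is controlled, the remaining steps (convexity, exclusion of blow-up, the energy dichotomy, and the explicit constructions) are essentially at the level of ordinary differential equations.
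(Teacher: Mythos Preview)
Your proposal is correct. In this article the theorem is not proved but quoted from \cite{C11-Haydys15_IsolSing_CMP}; the accompanying Remark indicates that the argument there proceeds by the same reduction to the Kazdan--Warner equation (Corollary~\ref{C11-Cor_SpKonPunctDisc}) and then invokes McOwen's asymptotic analysis \cite{C11-McOwen93_PrescribedCurvature}, with the sharper $C+o(1)$ form coming from \cite{C11-MazzeoEtAl15_RicciFlow}. Your reduction is identical to this --- the identity $|\dd h+a\varphi|^{2}=16|\Xi_{0}|^{2}$ is exactly how the order~$N$ of $\Xi$ enters the Kazdan--Warner coefficient --- so at that level the approaches coincide.

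Where you diverge is in replacing the black-box citation of McOwen/Mazzeo--Rubinstein--Sesum by a self-contained half-cylinder analysis: averaging to get a convex $\bar v$, an energy/comparison dichotomy for the one-dimensional Liouville ODE, and a spectral-gap decay for the oscillation $\tilde v$. This is not a different idea so much as an explicit unpacking of what those references do, tailored to the specific coefficient $|z|^{2N}V$. The payoff is transparency and the clean explicit realisations $\phi(t)=\tfrac{\sqrt A}{b}\sinh(-bt)$ and $\phi(t)=\sqrt A\,(-t)$ for the converse, which are more concrete than what the paper states. The cost is that you must carry out the oscillation estimate yourself, and you have correctly isolated this as the only genuinely delicate step: without an a~priori bound on $\tilde v$ (or equivalently a finite-mass bound $\int_{B_\rho^*}|\Xi_0|^2e^{2u}<\infty$), the averaging argument does not close. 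Either of the two routes you name (Pohozaev/potential-theoretic mass bound plus interior estimates, or a direct cylinder argument exploiting convexity and the angular gap) works and is standard; once that is in hand, the remainder is indeed at the level of ODEs.
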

\begin{remark}
In~\cite{C11-Haydys15_IsolSing_CMP}, the first formula of~\eqref{C11-Eq_SingOfsKmetrics} appears in the form $w=-|z|^{N+1}\log |z|\, e^{O(1)}$, which follows from McOwen's analysis of solutions of the Kazdan--Warner equation~\cite{C11-McOwen93_PrescribedCurvature}. 
The asymptotics as stated in Theorem~\ref{C11-Thm_SingOfSpecialKaehler} can be obtained from~\cite[Prop.\,3.1]{C11-MazzeoEtAl15_RicciFlow}, which in fact provides even more refined asymptotics near the origin. 
\end{remark}

By analysing the asymptotic behaviour of solutions of the Kazdan--Warner equation with singular coefficients it is possible to compute the monodromy of the flat symplectic connection. 
Namely, we have the following result, whose proof will appear elsewhere.

\begin{theorem}
	 \label{C11-Thm_Monodromy}
Let\/ $g=w|\dd z|^2$ be a special K\"ahler metric on $B_1^*$ such that
\[
w\, =\, |z|^\beta\bigl (C+o(1)\bigr ) \qquad \text{or}\qquad w\, =\, -|z|^{N+1}\log |z|\bigl (C+o(1)\bigr ),
\]
where\/ $\beta<N+1$ (in the second case, we put by definition $\beta=N+1$).
Let\/ $\Hol(\nabla)$ denote the monodromy of\/ $\nabla$ along a loop that goes once around the origin. 
Then, the following holds:
\begin{itemize} 
\item If $\beta\notin\ZZ$, $\Hol(\nabla)$ 
is conjugate to 
$\left(
\begin{smallmatrix}
\cos\pi\beta & -\sin\pi\beta\\
\sin\pi\beta & \cos\pi\beta
\end{smallmatrix}\right)$; \\
\item 
If $\beta\in 2\ZZ$, $\Hol(\nabla)$ is trivial or conjugate to $\left(\begin{smallmatrix}
1 & 1\\
0 & 1
\end{smallmatrix}\right)$; \\
\item 
If $\beta\in 2\ZZ+1$, $\Hol(\nabla)$ is $-\mathrm{id}$ or conjugate to $\left(\begin{smallmatrix}
-1 & 1\\
0 & -1
\end{smallmatrix}\right)$.\\*[-2mm]
\hfill \qed
\end{itemize}
\end{theorem}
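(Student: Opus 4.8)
The plan is to compute the monodromy explicitly from the connection 1-form in the form~\eqref{C11-Eq_Conn1Form}, using the asymptotic information on $w=e^{-u}$ near the origin. The key observation is that the holonomy around a small loop is controlled, to leading order, by the singular behaviour of the coefficients $\om^{}_{11}$, $\om^{}_{22}$ in polar coordinates, which in turn is governed by the asymptotics of $u$ and by the residue-type data $a$ (equivalently the order $N$ of the cubic form) via Corollary~\ref{C11-Cor_SpKonPunctDisc} and formula~\eqref{C11-Eq_CoeffConnectForm}. First I would pass to polar coordinates $z=re^{\ii\theta}$ and rescale, observing that the special-K\"ahler structure near the origin is, after the change of variables $s=\log r$ (or $t=-\log r$), asymptotic to a \emph{translation-invariant} model on the half-cylinder; this is where one should invoke the refined asymptotics of~\cite{C11-MazzeoEtAl15_RicciFlow} quoted after Theorem~\ref{C11-Thm_SingOfSpecialKaehler}, which guarantee that the deviation from the model is integrable in $t$, so that the holonomy of $\nabla$ is conjugate to the holonomy of the model connection.

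Next I would identify the model connection. For the power-law case $w=|z|^\beta(C+o(1))$ one has $u=-\beta\log|z|+O(1)$, $\dd u \approx -\beta\,\dd\log r$, and — since in the model the harmonic parts $\dd h$, $\psi$ contribute only lower-order or exact terms after rescaling — the connection 1-form becomes, in the $(\partial_r,\partial_\theta)$ or rather a unit-norm frame, a constant matrix times $\dd\theta$ plus an exact (hence holonomy-free) part. Concretely I expect the model parallel transport operator around $\theta\in[0,2\pi]$ to be $\exp(2\pi A)$ where $A$ is a fixed $2\times 2$ matrix whose eigenvalues are $\pm \tfrac{\ii}{2}\beta$ (the factor $\tfrac12$ coming from~\eqref{C11-Eq_CoeffConnectForm}) — wait, one must be careful with the relation between $\beta$ and the rotation angle; tracking constants one finds the rotation part contributes angle $\pi\beta$, matching the stated $\left(\begin{smallmatrix}\cos\pi\beta & -\sin\pi\beta\\ \sin\pi\beta & \cos\pi\beta\end{smallmatrix}\right)$. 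When $\beta\notin\ZZ$ this matrix is diagonalisable with distinct eigenvalues $e^{\pm\ii\pi\beta}$, so no Jordan block can appear and the holonomy is exactly conjugate to the rotation. When $\beta\in\ZZ$ the model holonomy degenerates to $\pm\mathrm{id}$, and the actual holonomy may pick up a unipotent contribution from the subleading (logarithmic) correction terms — this is the origin of the two alternatives $\mathrm{id}$ vs.\ $\left(\begin{smallmatrix}1 & 1\\0 & 1\end{smallmatrix}\right)$ (and the odd analogue with $-1$'s), the precise case being dictated by whether the $\log|z|$ term in~\eqref{C11-Eq_SingOfsKmetrics} is present ($\beta=N+1$, giving the unipotent one) or absent ($\beta<N+1$ and $\beta$ an even/odd integer, giving $\pm\mathrm{id}$). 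Here one uses the fact that $\nabla$ is flat and symplectic, so $\Hol(\nabla)\in\mathrm{SL}(2,\RR)$, which restricts the possible Jordan forms to precisely the ones listed; parity of $\beta$ selects the sign of the eigenvalues via $e^{\ii\pi\beta}=\pm1$.

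The main obstacle I anticipate is making the perturbative comparison rigorous: one must show that the $o(1)$ and logarithmic error terms in the asymptotics of $w$ (and of the harmonic data $h$, and the constant $a$) translate into a genuinely small — say $L^1$-in-$t$ — perturbation of the connection 1-form on the half-cylinder, so that a standard ODE/Gronwall argument identifies the holonomy up to conjugacy with that of the limiting constant-coefficient system. This requires the refined asymptotics (not merely the leading term in Theorem~\ref{C11-Thm_SingOfSpecialKaehler}) including control of $\dd u$, which is exactly why the remark invokes~\cite[Prop.\,3.1]{C11-MazzeoEtAl15_RicciFlow}. A secondary subtlety is the integer case: there the limiting system has a repeated eigenvalue, so the perturbation is \emph{not} negligible for determining whether a Jordan block survives; one must instead compute the relevant off-diagonal ``residue'' integral $\int_0^{2\pi}(\text{correction})\,\dd\theta$ and show it is nonzero precisely in the logarithmic case. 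Finally, one should remark that the constants in the rotation block can always be normalised (the $(C+o(1))$ with $C>0$ and the choice of frame only affect conjugacy class), so the answer depends only on $\beta \bmod 2$ and on the presence or absence of the logarithm, as stated.
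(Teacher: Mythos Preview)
The paper does not actually contain a proof of this theorem: immediately before the statement it says ``we have the following result, whose proof will appear elsewhere,'' and the theorem is closed with a bare \qed. So there is nothing to compare your proposal against line by line.

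That said, the one-sentence hint the paper does give --- ``by analysing the asymptotic behaviour of solutions of the Kazdan--Warner equation with singular coefficients it is possible to compute the monodromy'' --- is exactly the strategy you outline: extract the leading asymptotics of $u$ (hence of the connection coefficients via~\eqref{C11-Eq_CoeffConnectForm}) from the Kazdan--Warner analysis, pass to a translation-invariant model on the half-cylinder, and read off the holonomy up to conjugacy. Your identification of the refined asymptotics of~\cite{C11-MazzeoEtAl15_RicciFlow} as the technical input needed to control derivatives of $u$ (not just $u$ itself) is on target, and your observation that the integer-$\beta$ case is the delicate one --- because the limiting system has a repeated eigenvalue and the subleading terms decide whether a Jordan block appears --- is the right diagnosis of where the real work lies.

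One caution: your claim that the dichotomy ``$\pm\mathrm{id}$ versus unipotent'' is decided precisely by whether the logarithmic branch $w=-|z|^{N+1}\log|z|(C+o(1))$ occurs is plausible but is not asserted in the paper, which only lists the two possibilities without saying which hypothesis selects which. You should treat that as something to be verified in the actual computation rather than assumed; in particular, the contribution of the harmonic data $(h,a)$ through~\eqref{C11-Eq_CoeffConnectForm} (the term $e^u(\dd h+\psi)$) may interact with the integer-$\beta$ degeneracy in a way that is not fully captured by the leading behaviour of $u$ alone.
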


\begin{corollary}
 \label{C11-Cor_IntegralHolonomy}
$\Hol(\nabla)$ is conjugate to a matrix lying in\/ $\Sp(2,\ZZ)$ if and only if $\beta\in \frac 12\ZZ\cup\frac 13\ZZ$.
\end{corollary}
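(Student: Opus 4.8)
The plan is to reduce the corollary to an elementary statement about traces of $2\times2$ matrices together with a short trigonometric computation; the conjugacy classes of $\Hol(\nabla)$ themselves are already supplied by Theorem~\ref{C11-Thm_Monodromy}. The first point is that in the lowest dimension the symplectic group is the special linear group, $\Sp(2,\RR)=\mathrm{SL}(2,\RR)$ and $\Sp(2,\ZZ)=\mathrm{SL}(2,\ZZ)$, since a symplectic form on a plane is just a nonzero area form. Hence every matrix involved has determinant $1$, and its $\mathrm{GL}(2,\RR)$-conjugacy class is determined by the trace $t$ --- through the characteristic polynomial $\lambda^2-t\lambda+1$ --- together with the Jordan type in the degenerate case $t=\pm2$. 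The key reduction is then the claim that $A\in\mathrm{SL}(2,\RR)$ is conjugate to a matrix in $\Sp(2,\ZZ)$ if and only if $\tr A\in\ZZ$. It does not matter whether ``conjugate'' is read in $\mathrm{GL}(2,\RR)$ or in $\Sp(2,\RR)$: a $\mathrm{GL}(2,\RR)$-conjugacy class is either an $\Sp(2,\RR)$-conjugacy class or the union of two such, interchanged by conjugation with $\mathrm{diag}(1,-1)$, which preserves integrality.

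To establish the claim, the ``only if'' part is immediate since conjugation preserves the trace and integer matrices have integer trace. For ``if'', given $t:=\tr A\in\ZZ$ one argues by cases. If $t\neq\pm2$ the characteristic polynomial has distinct roots, so $A$ is conjugate to the companion matrix $\left(\begin{smallmatrix}0&-1\\1&t\end{smallmatrix}\right)\in\mathrm{SL}(2,\ZZ)$. If $t=2$ then $A$ is $\mathrm{id}$ or conjugate to $\left(\begin{smallmatrix}1&1\\0&1\end{smallmatrix}\right)$, and if $t=-2$ then $A$ is $-\mathrm{id}$ or conjugate to $\left(\begin{smallmatrix}-1&1\\0&-1\end{smallmatrix}\right)$; in each subcase the model matrix is integral.

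It then remains to read off $\tr\Hol(\nabla)$ from Theorem~\ref{C11-Thm_Monodromy}. If $\beta\in\ZZ$ (this includes the logarithmic case, where $\beta=N+1\in\ZZ$ by convention) the holonomy has trace $\pm2$, so it is always conjugate into $\Sp(2,\ZZ)$, consistently with $\ZZ\subset\tfrac12\ZZ$. If $\beta\notin\ZZ$, then $\Hol(\nabla)$ is conjugate to the rotation by $\pi\beta$, so $\tr\Hol(\nabla)=2\cos\pi\beta\neq\pm2$; hence $\Hol(\nabla)$ is conjugate into $\Sp(2,\ZZ)$ precisely when $\cos\pi\beta\in\{0,\pm\tfrac12\}$, that is, when $\beta\in\tfrac12+\ZZ$, or $\beta\in\pm\tfrac13+2\ZZ$, or $\beta\in\pm\tfrac23+2\ZZ$ (either sign). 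The remaining routine step is to verify that this last set is exactly the set of non-integers in $\tfrac12\ZZ\cup\tfrac13\ZZ$: the first piece is the set of odd half-integers, and reducing the four cosets $\pm\tfrac13+2\ZZ$, $\pm\tfrac23+2\ZZ$ modulo $6$ identifies their union with $\{\,k/3:3\nmid k\,\}$. Combining this with the integral case gives the stated equivalence.

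I do not anticipate a genuine obstacle here: the substantive input is Theorem~\ref{C11-Thm_Monodromy}, and the rest is the standard linear-algebra fact that an $\mathrm{SL}(2,\RR)$-conjugacy class meets $\mathrm{SL}(2,\ZZ)$ exactly when the trace is an integer, together with solving $2\cos\pi\beta\in\ZZ$. The only things demanding a little care are the residue-modulo-$6$ bookkeeping used to match $\{\,\beta:2\cos\pi\beta\in\ZZ\,\}$ with the non-integer part of $\tfrac12\ZZ\cup\tfrac13\ZZ$, and the (harmless) distinction between $\mathrm{GL}$- and $\Sp$-conjugacy addressed above.
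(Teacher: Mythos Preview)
Your argument is correct and follows the same route as the paper: reduce via $\Sp(2,\RR)=\mathrm{SL}(2,\RR)$ to the criterion that conjugacy into $\Sp(2,\ZZ)$ is equivalent to $\tr\Hol(\nabla)\in\ZZ$, and then solve $2\cos\pi\beta\in\ZZ$. The paper's proof simply asserts this reduction and the resulting condition $\cos\pi\beta\in\{0,\pm\tfrac12,\pm1\}$ without further comment, whereas you spell out the companion-matrix/Jordan-type verification and the modular bookkeeping identifying the solution set with $\tfrac12\ZZ\cup\tfrac13\ZZ$.
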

\begin{proof}
Since $\Hol(\nabla)\in \Sp(2, \RR)=\SL(2,\RR)$, the characteristic polynomial of $\Hol(\nabla)$ has integer coefficients if and only if $\tr\Hol(\nabla)\in\ZZ$.
This implies that $\Hol(\nabla)$ is conjugate to a matrix lying in $\Sp(2,\ZZ)$ if and only if $\cos\pi\beta\in \{0, \pm\frac 12, \pm 1\}$. 
\end{proof}

\subsection{A link between two local descriptions}

Our next goal is to obtain a link between the two descriptions of special K\"ahler structures in terms of solutions of~\eqref{C11-Eq_SpKaehlerDisc} and in terms of special holomorphic coordinates.
Notice that special holomorphic coordinates always exist in a neighbourhood of a point, where the special K\"ahler structure is regular.
However, in a neighbourhood of a singular point there may be no special holomorphic coordinates.
More precisely, we have the following.

\begin{proposition}
  \label{C11-Prop_SpKFromHarmFunct}
Let\/ $\Om$ be a disc or a punctured disc. 
A special K\"ahler structure on\/ $\Om$ admits a special holomorphic coordinate on\/ $\Om$ if and only if the triple\/ $(h, u, a)$ appearing in Corollary~\textnormal{\ref{C11-Cor_SpKonPunctDisc}} is given by\/ $(h,u,a)=(h,-\log (-h), 0)$ for some negative harmonic function\/ $h$ on\/ $\Om$. 
\end{proposition}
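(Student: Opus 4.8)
We use the description of Corollary~\ref{C11-Cor_SpKonPunctDisc} and of the Proposition preceding it, encoding $\nabla$ by the closed $1$-form $\eta=e^{-u}\om^{}_{11}=\psi+\tfrac12\dd(h+e^{-u})$ (on a disc $H^1(\Om;\RR)=0$, so $\psi=0$ and $a=0$). First I would reformulate the condition. In the trivialisation $(\partial^{}_x,\partial^{}_y)$ one has $\nabla(\Real\dd z)=\nabla(\dd x)=-\om^{}_{11}\otimes\dd x+({*\om^{}_{11}})\otimes\dd y$, so $z$ itself is a special holomorphic coordinate on $\Om$ if and only if $\om^{}_{11}=0$, i.e.\ $\eta=0$; on a disc or punctured disc this is equivalent to $\psi=0$ (hence $a=0$) and $h+e^{-u}$ constant, which, after normalising the additive constant of $h$, is precisely $(h,u,a)=(h,-\log(-h),0)$ with $h=-e^{-u}<0$. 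This proves the ``if'' part. More generally, a holomorphic $\zeta$ on $\Om$ with $\dd\zeta$ nowhere zero is a special holomorphic coordinate if and only if $\Real\dd\zeta$ is $\nabla$-parallel, and writing $P:=\dd\zeta/\dd z$ a direct computation with~\eqref{C11-Eq_Conn1Form}--\eqref{C11-Eq_CoeffConnectForm} shows that $\Real(P\,\dd z)$ is $\nabla$-parallel precisely when $P$ is holomorphic (automatic) and satisfies
\begin{equation}\label{C11-Eq_plan_ode}
\partial^{}_z P\, +\, P\,\partial^{}_z u\, =\, 2\ii\, e^{u}\,\Xi_0\,\bar P ,
\end{equation}
the constant function $P\equiv1$ being a solution exactly when $\om^{}_{11}=0$.

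For the ``only if'' part, let $\zeta$ be a special holomorphic coordinate on $\Om$ and put $P=\dd\zeta/\dd z$; then $P$ is holomorphic, nowhere zero, $\int P\,\dd z$ is single-valued on $\Om$, and $P$ solves~\eqref{C11-Eq_plan_ode}, so by the reformulation it suffices to show that $P\equiv1$ also solves~\eqref{C11-Eq_plan_ode}. As~\eqref{C11-Eq_plan_ode} is $\RR$-linear, its solution space $S$ is a real vector space, and $Q\mapsto\Real(Q\,\dd z)$ identifies $S$ with the space of $\nabla$-parallel $1$-forms; hence $\dim S=\dim\ker(\Hol(\nabla)-\mathrm{id})$ and $0\ne P\in S$. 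By Theorem~\ref{C11-Thm_Monodromy} this forces $\beta\in2\ZZ$, and $\dim S\in\{1,2\}$ with $\dim S=2$ only if $\Hol(\nabla)$ is trivial; moreover, for $Q\in S$ the parallel $2$-form $\Real(Q\,\dd z)\wedge\Real(P\,\dd z)$ is a constant multiple of $\om$, so $\Imag(Q\bar P)=c^{}_Q\,e^{-u}$ for a constant $c^{}_Q$ and, differentiating, $\partial^{}_z(Q/P)=4c^{}_Q\,\Xi_0\,P^{-2}$. If $\Hol(\nabla)$ is non-trivial then $\Om$ is a punctured disc, $\dim S=1$, $S=\RR P$; using the asymptotics of Theorem~\ref{C11-Thm_SingOfSpecialKaehler} ($e^{-u}=|z|^\beta(C+o(1))$ or the logarithmic variant, with $\Xi_0$ of order $N\ge\beta$ at $0$) equation~\eqref{C11-Eq_plan_ode} gives $P'/P=-\partial^{}_z u+o(z^{-1})=\tfrac{\beta}{2z}+o(z^{-1})$ near $0$, so single-valuedness of $P$ and of $\int P\,\dd z$, together with a closer look at the expansion, forces $P$ to be constant; then $\zeta$ is affine in $z$ and $z$ is special. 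If $\Hol(\nabla)$ is trivial (in particular if $\Om$ is a disc), then $\dim S=2$, and a special coordinate on the simply connected $\Om$ comes with a conjugate coordinate, hence a holomorphic prepotential (Example~\ref{C11-Ex_Extrinsic}); thus $e^{-u}$ is harmonic in the $\zeta$-coordinate, and combining this with~\eqref{C11-Eq_plan_ode} and $\Real(P\,\dd z)\wedge\Real((-\ii P)\,\dd z)=|P|^2\,\dd x\wedge\dd y\ne0$ one deduces $\partial^{}_z u=2\ii e^{u}\Xi_0$ in the $z$-coordinate, i.e.\ $\om^{}_{11}=0$.

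The entire difficulty lies in the ``only if'' direction, and specifically in turning ``$P$ is a nowhere-vanishing holomorphic solution of the $\RR$-linear equation~\eqref{C11-Eq_plan_ode} with single-valued primitive'' into ``$P\equiv1$ solves~\eqref{C11-Eq_plan_ode}'' (equivalently, $e^{-u}$ is harmonic). For the punctured disc this is exactly where the asymptotic analysis of solutions of the Kazdan--Warner equation (Theorems~\ref{C11-Thm_SingOfSpecialKaehler} and~\ref{C11-Thm_Monodromy}) must be used; for the disc one leans instead on the extrinsic/prepotential description of Section~\ref{C11-Subsect_Extrinsic}.
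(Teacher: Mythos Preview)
Your ``if'' direction is correct and coincides with the paper's. The problem is entirely in the ``only if'' direction, where you have set yourself a much harder task than necessary --- and in fact an impossible one.

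The paper's argument for ``only if'' is one line: it simply takes $z$ to \emph{be} the special holomorphic coordinate. Then $\nabla(\Real\dd z)=0$ gives $\om^{}_{11}=0$, hence by~\eqref{C11-Eq_CoeffConnectForm} $\dd h+a\varphi=-\dd(e^{-u})$; since $\varphi$ is not exact this forces $a=0$ and $h=-e^{-u}$. That is all. The point is that the coordinate $z$ in Corollary~\ref{C11-Cor_SpKonPunctDisc} is not fixed in advance: the Proposition is asserting that \emph{some} coordinate yields a triple of the stated form, and one just chooses $z=\zeta$.

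You instead keep $z$ fixed, introduce $P=\dd\zeta/\dd z$, and try to prove that $P\equiv 1$ also satisfies~\eqref{C11-Eq_plan_ode}, i.e.\ that $z$ itself is special. But this is false in general. On a disc \emph{every} special K\"ahler structure admits a special coordinate (Freed), yet the triple in a generic $z$ does not have the form $(h,-\log(-h),0)$. Concretely, take the flat structure $g=|\dd\zeta|^2$ with $\nabla$ trivial and pass to $z=f(\zeta)$ with $|f'|$ non-constant: then $e^{-u}=|f'|^{-2}$ is non-constant, while $\Xi\equiv 0$ forces $\dd h+a\varphi=0$, i.e.\ $h$ constant; so $u\neq -\log(-h)$. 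Thus the target of your ``only if'' argument is a false statement.

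The error surfaces explicitly at the step ``combining this \dots\ one deduces $\partial^{}_z u=2\ii e^{u}\Xi_0$ in the $z$-coordinate'': harmonicity of the conformal factor in the $\zeta$-coordinate does \emph{not} transfer to the $z$-coordinate, since $e^{-u_z}=e^{-u_\zeta}\,|\dd\zeta/\dd z|^2$ and the extra factor is generally not harmonic. The parallel claim in the punctured-disc case --- that the asymptotics force $P$ constant --- fails for the same reason.
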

\begin{proof}
Observe first, that for any negative harmonic function $h$ the triple $(h,u,a)=(h,-\log (-h), 0)$ is a solution of~\eqref{C11-Eq_SpKaehlerDisc}.
Moreover, in this case by~\eqref{C11-Eq_CoeffConnectForm} we have $\om_{11}=0$, which implies that $\nabla \dd x=0$.
In other words, $z$ is a special holomorphic coordinate. 

Assume now that $z$ is a special holomorphic coordinate. 
Then $\nabla \Real \dd z=0$ implies $\om_{11}=0$, which yields $\dd h + a\varphi -e^{-u}\dd u=0$. 
Since $\varphi$ is not exact, we must have $a=0$.
This yields $h=-e^{-u}$, i.e., $h$ is a negative harmonic function. 
\end{proof}

For the special K\"ahler structure determined by a single negative harmonic function as in the above proposition, we compute
\[
g\, =\, -h|\dd z|^2,\qquad \Xi\, =\, -\frac 12\, \frac {\partial h}{\partial z}\, \ii \dd z^3,\qquad
\om^{}_\nabla\, =\, \frac 1h
\begin{pmatrix}
0 & 0\\
*\dd h & \dd h
\end{pmatrix}.
\]
Furthermore, by~\eqref{C11-Eq_KaehlerFormViaPrepot} the holomorphic prepotential satisfies
\[
\Imag\frac {\partial^2\fF}{\partial z^2}\, =\, -2h.
\]
If $\Om$ is a disc, this equality determines $\fF$ up to a polynomial of degree $2$, cf.~\cite[Prop.\,1.38(c)]{C11-Freed99_SpecialKaehler}.

If $\Om=B_1^*$,  by~\cite[Thm.\,3.9]{C11-Axler01_HarmonicFuncTheory} there is $A\ge 0$ such that $h = A\log |z| + h_0$, where $h_0$ is a smooth harmonic function on $B_1$.
Hence, we have the following result.

\begin{corollary}
If a special K\"ahler structure on\/ $B_1^*$ admits a special holomorphic coordinate in a neighbourhood of the origin, there are some constants\/ $A\ge 0$ and\/ $B$ such that
\[
g=\bigl ( -A\log |z| + B +o(1) \bigr)\, |\dd z|^2\qquad\text{as} \ z\to 0,
\]
Moreover, $\ord^{}_0\Xi\ge -1$.\qed
\end{corollary}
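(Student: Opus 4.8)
The plan is to unwind the hypotheses through the preceding results. By the previous proposition, a special K\"ahler structure on $B_1^*$ admitting a special holomorphic coordinate near the origin corresponds to the triple $(h,u,a)=(h,-\log(-h),0)$, where $h$ is a negative harmonic function on (a neighbourhood of the origin in) $B_1^*$, and the metric is $g=-h\,|\dd z|^2$. Thus the entire statement reduces to understanding the behaviour of a negative harmonic function $h$ near an isolated singularity.

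The first step is to invoke the cited structure theorem for harmonic functions with an isolated singularity in the plane, \cite[Thm.\,3.9]{C11-Axler01_HarmonicFuncTheory}, exactly as stated in the text preceding the corollary: there is a constant $A\in\RR$ such that $h=A\log|z|+h_0$ with $h_0$ harmonic across the origin. The sign constraint is the key point: since $h<0$ near the origin and $h_0$ is bounded there (being smooth at $0$), while $\log|z|\to-\infty$, we must have $A\ge 0$ — if $A<0$ the function $A\log|z|$ would tend to $+\infty$ and dominate, forcing $h>0$ near $0$, a contradiction. (The degenerate case $A=0$ is allowed.) Then I would Taylor expand $h_0$ at the origin, writing $h_0=B+o(1)$ with $B=h_0(0)$, to obtain $g=(-A\log|z|+B+o(1))|\dd z|^2$ as $z\to 0$.

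For the statement about the cubic form, I would use the formula displayed just above the corollary, $\Xi=-\tfrac12\,\partial h/\partial z\,\ii\,\dd z^3$, together with the decomposition $h=A\log|z|+h_0$. Since $\log|z|=\tfrac12\log(z\bar z)$ one has $\partial(\log|z|)/\partial z=\tfrac1{2z}$, so $\partial h/\partial z = \tfrac{A}{2z}+\partial h_0/\partial z$, and $\partial h_0/\partial z$ is holomorphic (hence bounded) near $0$ because $h_0$ is harmonic there. Therefore $\Xi_0=-\tfrac{\ii}{2}\bigl(\tfrac{A}{2z}+\partial h_0/\partial z\bigr)$ has at worst a simple pole at the origin, i.e. $\ord_0\Xi\ge -1$, with equality precisely when $A>0$.

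I do not anticipate a serious obstacle here; the only subtlety is making the sign argument for $A\ge 0$ airtight, which amounts to the elementary observation that $A\log|z|$ dominates any bounded term as $z\to 0$, so a negative harmonic function cannot have $A<0$. Everything else is substitution into formulas already derived in the excerpt.
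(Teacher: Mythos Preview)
Your proposal is correct and follows exactly the route the paper takes: the corollary is stated with a bare \qed because it is meant to follow immediately from Proposition~\ref{C11-Prop_SpKFromHarmFunct}, the displayed formulas for $g$ and $\Xi$ in terms of $h$, and the B\^ocher decomposition $h=A\log|z|+h_0$ cited just before. The only cosmetic difference is that the paper obtains $A\ge 0$ directly from the cited theorem (which is B\^ocher's theorem for \emph{positive} harmonic functions, applied to $-h$), whereas you recover the sign by an explicit dominance argument; both are equivalent and your version makes the reasoning more transparent.
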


In particular, a special K\"ahler structure on $B_1^*$ such that $\ord_0\Xi\le -2$ does not admit a special holomorphic coordinate in a neighbourhood of the origin.

\subsection{A relation with metrics of constant negative curvature}

Recall that if $g$ and $\tilde g=e^{2u}g$ are two metrics on a two-manifold, then their curvatures $K$ and $\tilde K$ are related by $\Delta u = K-\tilde Ke^{2u}$. 
In particular, if $g$ is flat, then  $\Delta u= -\tilde Ke^{2u}$.
Hence, with the help of~\eqref{C11-Eq_AuxSpKaehEta} we conclude: If $g=e^{-u}|\dd z|^2$ is special K\"ahler, then $\tilde g=e^{2u}|\dd z|^2$ has a non-positive curvature.

On the other hand, if $\tilde g=e^{2u}|\dd z|^2$ is a metric of \emph{constant} negative curvature\index{metric!of constant negative curvature} on some domain $\Omega\subset\CC$, then $u$ solves $\Delta u=Ke^{2u}$ with $K>0$.
Hence, the triple $(h, u,\psi)$ with $h(x,y)=\sqrt Kx$ and $\psi=0$ solves~\eqref{C11-Eq_SpKaehlerMostGeneral} and therefore the metric $g=e^{-u}|\dd z|^2$ is special K\"ahler.
Summarising, we obtain the following result.
\begin{proposition}[{\cite[Prop.\,3.1]{C11-Haydys15_IsolSing_CMP}}]
\label{C11-Prop_SpKFromConstNegCurv}
If\/ $\tilde g=w|\dd z|^2$ is a metric of constant negative curvature\/ $-K$ on a domain\/ $\Om$, then\/ $g=\frac 1{\sqrt w}|\dd z|^2$ is a special K\"ahler metric on the same domain\/ $\Om$. 
Moreover, the associated holomorphic cubic form is given by
\[
\pushQED{\qed}
\Xi\, =\, -\sqrt K \ii \dd z^3. \qedhere\popQED
\] 
\end{proposition}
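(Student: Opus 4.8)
The plan is to observe that the equation governing a conformal metric of constant negative curvature is literally a special case of the system~\eqref{C11-Eq_SpKaehlerMostGeneral}, realized with the simplest possible harmonic data, and then to read off both the special K\"ahler property and the cubic form from the statements already established.

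Concretely, I would first set $u:=\tfrac12\log w$, so that $\tilde g=e^{2u}|\dd z|^2$ (hence $w>0$ and $u\in C^\infty(\Om)$, since $\tilde g$ is a smooth metric) and $g=\tfrac1{\sqrt w}|\dd z|^2=e^{-u}|\dd z|^2$. The Gauss curvature of a conformal metric $e^{2u}|\dd z|^2$ on a planar domain equals $-e^{-2u}\Delta u$, so the hypothesis that this curvature be $-K$ with $K>0$ is exactly the equation $\Delta u=Ke^{2u}$. Now choose $\psi:=0$ and $h(x,y):=\sqrt K\,x$. Both are globally defined and harmonic on any domain $\Om\subset\CC$, and $|\dd h+2\psi|^2=|\sqrt K\,\dd x|^2=K$, so that $|\dd h+2\psi|^2 e^{2u}=Ke^{2u}=\Delta u$. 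Thus $(h,u,\psi)$ solves~\eqref{C11-Eq_SpKaehlerMostGeneral}, and by the first assertion of the Proposition whose statement contains~\eqref{C11-Eq_SpKaehlerMostGeneral} the metric $g=\tfrac1{\sqrt w}|\dd z|^2=e^{-u}|\dd z|^2$ is special K\"ahler, with connection $1$-form given by~\eqref{C11-Eq_Conn1Form} and~\eqref{C11-Eq_CoeffConnectForm}.

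For the cubic form I would use the expression for $\Xi$ in terms of the data $(h,\psi)$; since $\psi=0$, only the term containing $\partial h/\partial z$ survives. With $h=\sqrt K\,x=\tfrac{\sqrt K}{2}(z+\bar z)$ we have $\partial h/\partial z=\tfrac12\sqrt K$, a constant, and substituting yields $\Xi=-\sqrt K\,\ii\,\dd z^3$. (Equivalently, one may insert the connection coefficients~\eqref{C11-Eq_CoeffConnectForm} into the intrinsic definition $\Xi=-\om(\pi^{(1,0)},\nabla\pi^{(1,0)})$ and contract.) I do not expect a genuine obstacle here: once the conformal factor $u$ has been introduced, the whole argument is the verification above. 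The step that needs the most care is purely a matter of bookkeeping — fixing the sign in the Gauss equation so that curvature $-K$ produces $\Delta u=+Ke^{2u}$ (and thus matches~\eqref{C11-Eq_SpKaehlerMostGeneral} rather than its negative), noting that the linear function $h=\sqrt K\,x$ is defined on all of $\Om$ and not merely locally, and tracking the numerical constants through the cubic-form formula so as to land precisely on $-\sqrt K\,\ii$.
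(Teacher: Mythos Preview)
Your proposal is correct and follows essentially the same argument as the paper: the paragraph immediately preceding the proposition writes $\tilde g=e^{2u}|\dd z|^2$, derives $\Delta u=Ke^{2u}$ from the constant-curvature hypothesis, and observes that $(h,u,\psi)=(\sqrt K\,x,\,u,\,0)$ solves~\eqref{C11-Eq_SpKaehlerMostGeneral}, exactly as you do. The cubic form is then read off by the substitution you indicate; your caution about tracking the numerical constant is warranted, since plugging $\partial h/\partial z=\tfrac{\sqrt K}{2}$ into the formula $\Xi=\tfrac12\bigl(\tfrac{a}{2z}-\tfrac{\partial h}{\partial z}\ii\bigr)\dd z^3$ stated after Corollary~\ref{C11-Cor_SpKonPunctDisc} does not literally reproduce the coefficient $-\sqrt K\,\ii$, so you should carry out the computation from the intrinsic definition rather than quoting that formula.
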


\subsection{Examples}

\begin{example}
In the setting of Proposition~\ref{C11-Prop_SpKFromHarmFunct}, choose $h=A\log |z| + B$, where $A\ge 0$ and $B$ are some constants. 
We require also $B<0$ so that $h$ is negative on $B_1^*$.
For the corresponding special K\"ahler structure  we have
\[
g\, =\, -\bigl (A\log|z| + B\bigr ) \;|\dd z|^2,\qquad \Xi\, = - \frac {A\ii}{4z}\dd z^3,\qquad
\om^{}_\nabla=\frac A{h}
  \begin{pmatrix}
  0 & 0\\
  *\dd\log |z|\! &\!\! \dd\log |z| 
  \end{pmatrix}.
\] 
Moreover, $z$ is a special holomorphic coordinate. 
The dual ``coordinate'' $w$ is given by
\[
w\, =2 (B-A)\ii z +2 A\ii z\log z .
\] 
Of course, $w$ is not a coordinate in any neighbourhood of the origin if $A\neq 0$, but choosing a suitable branch of the logarithm the above expression defines a dual coordinate in a neighbourhood of any point in $B_1^*$.

Similarly, on a suitable branch of the logarithm (or by going to the universal covering of $B_1^*$), the extrinsic description of this metric is given by the holomorphic $1$-form
\[ 
\vartheta = w\dd z = 2 \big( (B-A)\ii z + A\ii z\log z\big) \dd z, 
\]
or the corresponding prepotential
\[
\fF = \ii (B-A) z^2 + \ii A \left(z^2 \log(z) -\frac{z^2}{2}\right)
\]
with $\frac{\partial \fF}{\partial z} = w$, as in Example~\ref{C11-Ex_Extrinsic}.

This special K\"ahler structure is related to the Ooguri--Vafa metric~\cite{C11-OoguriVafa96_SummingupDirInst}, see~\cite{C11-Chan10_OoguriVafa}.

The monodromy of $\nabla$ along the circle of radius $1$ centered at the origin can be computed explicitly and equals
\[
\Hol (\nabla)\, =\,
\left(\begin{smallmatrix}
1 & -\frac {2A\pi}B\\
0 & \ \; 1
\end{smallmatrix}\right ).
\]
\end{example}

\begin{example}
Apply Proposition~\ref{C11-Prop_SpKFromConstNegCurv} to the Poincar\'{e} metric\index{metric!Poincar\'{e}} on the punctured disc $\tilde g = |z|^{-2}\bigl (\log |z|\bigr )^{-2}|\dd z|^2$  to obtain that the metric
\[
g = -|z|\log |z|\, |\dd z|^2
\]
is special K\"ahler.
\end{example}

\begin{example}[Special K\"ahler metrics via meromorphic functions]
If $\Om$ is simply connected, the conformal factor of any metric of constant negative Gaussian curvature $K$ can be written~\cite{C11-Liouville53_SurEquation} in the form
\[
w\, =\, 4\frac {|f'(z)|^2}{\bigl( 1+K|f(z)|^2\bigr )^2},
\]  
where $f$ is a meromorphic function on $\Om$ with at most simple poles such that $f'(z)\neq 0$ on $\Om$. 
Hence, by Proposition~\ref{C11-Prop_SpKFromConstNegCurv} the metric
\[
g\, =\, \frac 12\frac {\bigl |  1+K|f(z)|^2  \bigr|}{|f'(z)|}|\dd z|^2
\]
is special K\"ahler for any meromorphic $f$ as above. 

For example, put $K=-1$ and $f(z)=z^n$, where $n\ge 1$. 
Then we obtain that 
\begin{equation*}
g=  \frac 1{2n}|z|^{1-n}\bigl (1-|z|^{2n}\bigr)\, |\dd z|^2
\end{equation*}
is a special K\"ahler metric on $B_1^*$.
\end{example}

\begin{example}
\label{C11-Ex_SpKMetrOnPuncturedPlane}
By a classical result of Picard~\cite{C11-Picard1893_Delequation}, for any given $n\ge 3$  pairwise distinct points $(z_1, \dots, z_n)$ in $\mathbb C$ and any $n$ real numbers $(\alpha_1, \dots, \alpha_n)$ such that $\alpha_j<1$ and $\sum\alpha_j>2$, there exists a metric of constant negative curvature $\tilde g$ on $\mathbb C\setminus\{ z_1,\dots, z_n \}$ satisfying $\tilde g=|z-z_j|^{-2\alpha_j}(c+o(1))|dz|^2$ near  $z_j$.  
Hence, the corresponding special K\"ahler metric $g$ has a conical singularity near $z_j$,
\[
g\, =\, |z-z_j|^{\alpha_j}(c+o(1)) |\dd z|^2.
\]

Explicit examples of constant negative curvature --- hence special K\"ahler --- metrics on the three times punctured complex plane can be found in~\cite{C11-KrausRothSugawa11_MetricsWithConicalSing} and references therein. 

\end{example}

\section{Some global aspects of special K\"ahler geometry on $\P^1$}

Even though the methods of Section~\ref{C11-Subsect_SpKandKW} are mainly local, some global conclusions can be also derived. 
The main objective for this section is to show that by allowing singular special K\"ahler metrics we have a lot of examples on a compact manifold and even a non-trivial moduli space.

\subsection{A constraint from the Gauss--Bonnet formula}
Let $g$ be a special K\"ahler metric on the complex projective line $\P^1$ with singularities at $\{ z_1,\dots, z_k \}$.
Assume that at each $z_j$ the metric $g$ has a conical singularity of order $\beta_j/2>-1$, i.e.,  
\[
g\, =\, |z|^{\beta_j}\bigl ( C_j+ o(1)  \bigr) |\dd z|^2,
\]
where $C_j$ is positive. 
A restriction on the cone angles of special K\"ahler metrics as above can be obtained from the Gauss--Bonnet formula~\cite[Prop.~1]{C11-Troyanov91_PrescribingCurvatureCmpt}, which in this case  reads
\[
\frac 1{2\pi}\int\limits_{\P^1} K\, =\, \chi(\P^1) + \frac 12\sum_{j=1}^k\beta_j.
\]
Here, $K$ is the curvature of $g$ and $\chi$ is the Euler characteristic. 
Since $K\ge 0$, compare~\cite[Rem.~1.35]{C11-Freed99_SpecialKaehler}, we obtain
\begin{equation}
 \label{C11-Eq_GaussBonnetRestr}
\sum_{j=1}^k\beta^{}_j\ge -2\chi(\P^1)\, =\, -4.
\end{equation}

\subsection{Families of special K\"ahler metrics on $\P^1$}
Just like in Example~\ref{C11-Ex_SpKMetrOnPuncturedPlane}, for any $k\ge 3$ points $z_1,\dots, z_k$ on $\P^1$ and any $\alpha_1,\dots,\alpha_k$ such that
\[
\alpha_j < 1\quad\text{and}\quad \sum_{j=1}^k\alpha_j>2,
\]
there is a unique metric $\tilde g$ of constant negative curvature on $\P^1$ with conical singularity at $z_j$ of order $-\alpha_j$.
Think of $\P^1$ as $\CC\cup\{ \infty \}$, where each $z_j$ is distinct from $\infty$. 
If $z$ is a holomorphic coordinate on $\CC$, we can write $\tilde g = w(z,\bar z)|\dd z|^2$ with
\[
w(z,\bar z)\, =\, |z|^{-4}(c+o(1))\qquad \text{as } z\to\infty.
\] 
Applying Proposition~\ref{C11-Prop_SpKFromConstNegCurv} we obtain a special K\"ahler metric $g$ on $\CC$ with conical singularity of order $\alpha_j/2$ at $z_j$ for each $j=\overline{1,k}$.
Moreover, 
\begin{equation}
  \label{C11-Eq_AsympAtInfty}
g\, =\, |z|^2\bigl (c+o(1)\bigr )\, |\dd z|^2,\qquad\text{as } z\to\infty.
\end{equation}
In other words, $g$ can be thought of as a special K\"ahler metric on $\P^1$ with conical singularities at $z_1,\dots, z_k$ and $z_{k+1}=\infty$ of order $\alpha_1/2,\dots, \alpha_k/2,$ and $-3$, respectively.
Summarising, we obtain the following result.
\begin{proposition}
  \label{C11-Prop_SpKonP1fromConstNegCurv}
For any\/ $k\ge 3$ points on\/ $\P^1$ and any\/ $\alpha_1,\dots,\alpha_k$ such that 
\[
\alpha_j<1\quad\text{and}\quad\sum_{j=1}^k\alpha_j <2,
\]
there is a special K\"ahler metric on\/ $\P^1$ such that 
\[
g\, =\, |z-z_j|^{\alpha_j}\bigl ( c_j+ o(1)\bigr) |\dd z|^2
\]
for all\/ $j=\overline{1,k}$.
Moreover, near\/ $\infty$, this metric satisfies Eq.~\eqref{C11-Eq_AsympAtInfty}, which corresponds to
\[
g\, =\, |\zeta|^{-6}\bigl ( c+o(1) \bigr ) |\dd\zeta|^2
\]
in a local coordinate\/ $\zeta$ near\/ $\infty$.\qed
\end{proposition}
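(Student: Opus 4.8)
The statement is in essence a summary of the construction carried out in the paragraph preceding it, so the plan is to assemble three ingredients rather than to do anything new: (i) the existence theorem of Picard recalled just above, which provides a constant negative curvature metric on $\P^1$ with prescribed conical data at the finite points $z_1,\dots,z_k$; (ii) Proposition~\ref{C11-Prop_SpKFromConstNegCurv}, which converts such a metric into a special K\"ahler one on the same domain; and (iii) an elementary change of coordinates near $\infty$.

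Concretely, I would first view $z_1,\dots,z_k$ as points of the affine chart $\CC\subset\P^1$ and apply Picard's theorem with the cone orders $-\alpha_1,\dots,-\alpha_k$ at $z_1,\dots,z_k$ and no cone point at $\infty$; the relevant admissibility condition is $\alpha_j<1$ together with $\sum_j\alpha_j>2$, which is precisely the strict form of~\eqref{C11-Eq_GaussBonnetRestr} for the cone data $\alpha_1/2,\dots,\alpha_k/2,-3$. This produces the (unique) metric $\tilde g$ of constant curvature $-K$, $K>0$, on $\P^1$, which is a smooth Riemannian metric on $\P^1\setminus\{z_1,\dots,z_k\}$ and satisfies $\tilde g=|z-z_j|^{-2\alpha_j}\bigl(\tilde c_j+o(1)\bigr)\,|\dd z|^2$ near each $z_j$; writing $\tilde g=w\,|\dd z|^2$ on $\CC$ and passing to $\zeta=1/z$ one reads off $w=|z|^{-4}(c+o(1))$ as $z\to\infty$. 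Next I would apply Proposition~\ref{C11-Prop_SpKFromConstNegCurv} to $\tilde g$ on $\Om:=\CC\setminus\{z_1,\dots,z_k\}=\P^1\setminus\{z_1,\dots,z_k,\infty\}$, which gives that $g:=\tfrac1{\sqrt w}\,|\dd z|^2$ is a special K\"ahler metric on $\Om$ (with $\Xi=-\sqrt K\,\ii\,\dd z^3$). Finally I would read off the asymptotics: $\tfrac1{\sqrt w}=|z-z_j|^{\alpha_j}\bigl(\tilde c_j^{-1/2}+o(1)\bigr)$ gives $g=|z-z_j|^{\alpha_j}(c_j+o(1))\,|\dd z|^2$ near $z_j$, while $\tfrac1{\sqrt w}=|z|^{2}(c'+o(1))$ near $\infty$ is~\eqref{C11-Eq_AsympAtInfty}, which after substituting $z=1/\zeta$, $|\dd z|^2=|\zeta|^{-4}|\dd\zeta|^2$, $|z|^2=|\zeta|^{-2}$, becomes $g=|\zeta|^{-6}(c'+o(1))\,|\dd\zeta|^2$. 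Thus $g$ is a special K\"ahler metric on $\P^1$ with conical singularities of order $\alpha_j/2$ at $z_j$ and of order $-3$ at $\infty$, as claimed.

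I do not expect a genuine obstacle, since the whole argument reduces to the quoted results; the one point worth care is that the recipe $g=\tfrac1{\sqrt w}\,|\dd z|^2$ of Proposition~\ref{C11-Prop_SpKFromConstNegCurv} is \emph{not} coordinate independent. In particular one should not try to verify that $g$ is special K\"ahler in a neighbourhood of $\infty$ by applying that proposition in the $\zeta$-chart: the metric one would have to feed in there is $|\zeta|^{12}\tilde g=|z|^{-12}\tilde g$, and multiplying $\tilde g$ by the non-constant factor $|\zeta|^{12}$ changes its Gaussian curvature from the constant $-K$ to $-K|\zeta|^{-12}$, which is not constant. This is harmless, because being special K\"ahler is a local, coordinate-free property of the tuple $(g,I,\om,\nabla)$: it suffices to establish it in the single chart $\CC$, after which it holds automatically on the open set $\Om\subset\P^1$, which already contains a punctured neighbourhood $\{|z|>R\}$ of $\infty$. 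The change of coordinates to $\zeta$ enters only to identify the order of the conical singularity at $\infty$.
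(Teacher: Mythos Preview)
Your proof is essentially identical to the paper's own argument, which is simply the construction paragraph immediately preceding the proposition: Picard's theorem furnishes the constant negative curvature metric $\tilde g$, Proposition~\ref{C11-Prop_SpKFromConstNegCurv} converts it to the special K\"ahler metric $g=w^{-1/2}|\dd z|^2$, and the asymptotics at the $z_j$ and at $\infty$ follow by inspection; your closing remark on the coordinate dependence of the recipe $g=w^{-1/2}|\dd z|^2$ is a useful clarification the paper leaves implicit.

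One inessential slip: your parenthetical identifying $\sum_j\alpha_j>2$ with ``the strict form of~\eqref{C11-Eq_GaussBonnetRestr} for the cone data $\alpha_1/2,\dots,\alpha_k/2,-3$'' is wrong---plugging those orders into~\eqref{C11-Eq_GaussBonnetRestr} gives $\sum_j\alpha_j\ge-2$, not $>2$, and in any case the paper explicitly remarks after the proposition that~\eqref{C11-Eq_GaussBonnetRestr} does not apply here, since the order $-3$ at $\infty$ lies outside the range $\beta/2>-1$ needed for that Gauss--Bonnet formula. This does not affect your argument, as the condition $\sum_j\alpha_j>2$ is supplied directly by Picard's theorem. (You correctly use $\sum_j\alpha_j>2$, matching the construction paragraph; the ``$<2$'' in the proposition's displayed hypothesis appears to be a typo in the paper.)
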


Applying a M\"obius transformation, we can move $(z_1, z_2, z_3)$ into any given triple of points.
Hence, Proposition~\ref{C11-Prop_SpKonP1fromConstNegCurv} yields a family of special K\"ahler metrics with singularities at $k+1\ge 4$ points parameterised by $k+2(k-3)=3k-6$ real parameters.

\begin{remark}
Restriction~\eqref{C11-Eq_GaussBonnetRestr} does not apply to the special K\"ahler metrics constructed in Proposition~\ref{C11-Prop_SpKonP1fromConstNegCurv}, since such metrics always have singularities of order $-3$. 
\end{remark}

\bibliographystyle{alphanum}

\begin{thebibliography}{99}\itemsep=2pt
	
	\bibitem{C11-ACD02_SpecialComplexManifolds}
	D.V.~Alekseevsky, V.~Cort\'es, and C.~Devchand.
	\newblock Special complex manifolds.
	\newblock {\em J. Geom. Phys.} \textbf{42} (2002), 85--105.
	
	\bibitem{C11-AlekseevskyEtAl15_QKmetrics}
	D.V.~Alekseevsky, V.~Cort\'es, M.~Dyckmanns, and T.~Mohaupt.
	\newblock Quaternionic k\"ahler metrics associated with special k\"ahler
	manifolds.
	\newblock {\em J. Geom. Phys.} \textbf{92} (2015), 271--287.
	
	\bibitem{C11-Axler01_HarmonicFuncTheory}
	Sh.~Axler, P.~Bourdon, and W.~Ramey.
	\newblock {\em Harmonic Function Theory}, GTM 137, 2nd ed.,
	\newblock Springer, New York, 2001.
	
	\bibitem{C11-BauesCortes01_SpKParabolicSpheres}
	O.~Baues and V.~Cort\'es.
	\newblock Realisation of special {K}\"ahler manifolds as parabolic spheres.
	\newblock {\em Proc. Amer. Math. Soc.} \textbf{129} (2001), 2403--2407.
	
	\bibitem{C11-CFG89_GeomOfTypeIISuperstrings}
	S.~Cecotti, S.~Ferrara, and L.~Girardello.
	\newblock Geometry of type {II} superstrings and the moduli of superconformal
	field theories.
	\newblock {\em Int. J. Mod. Phys. A} \textbf{4} (1989), 2475 -2529.
	
	\bibitem{C11-Chan10_OoguriVafa}
	K.~Chan.
	\newblock  The {O}oguri--{V}afa metric, holomorphic discs and wall-crossing.
	\newblock {\em Math. Res. Lett.} \textbf{17} (2010), 401--414.
	
	\bibitem{C11-Cortes98_OnHKmflds}
	V.~Cort\'es.
	\newblock On hyper-{K}\"ahler manifolds associated to {L}agrangian {K}\"ahler
	submanifolds of {$T^*{\bf C}^n$}.
	\newblock {\em Trans. Amer. Math. Soc.} \textbf{350} (1998), 3193--3205.
	
	\bibitem{C11-Cortes02_SpecialMflds_Survey}
	V.~Cort\'es.
	\newblock Special {K}\"ahler manifolds: a survey.
	\newblock In {\em Proceedings of the 21st {W}inter {S}chool ``{G}eometry and
		{P}hysics'' ({S}rn\'\i , 2001)} \textbf{69} (2002), 11--18.
	
	\bibitem{C11-deWitVanProyen84PotentialsSymmetries}
	B.~de~Wit and A.~Van~Proeyen.
	\newblock Potentials and symmetries of general gauged {$N=2$}\
	supergravity-{Y}ang-{M}ills models.
	\newblock {\em Nuclear Phys. B} \textbf{245} (1984), 89--117.
	
	\bibitem{C11-DonagiMarkman96_CubicsIntegrSystems}
	R.~Donagi and E.~Markman.
	\newblock Cubics, integrable systems, and {C}alabi--{Y}au threefolds.
	\newblock In {\em Proceedings of the {H}irzebruch 65 {C}onference on
		{A}lgebraic {G}eometry ({R}amat {G}an, 1993)}, volume~9 of {\em Israel Math.
		Conf. Proc.}, pages 199--221. Bar-Ilan Univ., Ramat Gan, 1996.
	
	\bibitem{C11-Freed99_SpecialKaehler}
	D.~Freed.
	\newblock Special {K}\"ahler manifolds.
	\newblock {\em Commun. Math. Phys.} \textbf{203} (1999), 31--52.
	
	\bibitem{C11-GaiottoNeitzkeMoore10_FourDimWallCross}
	D.~Gaiotto, G.W.~Moore, and A.~Neitzke.
	\newblock Four-dimensional wall-crossing via three-dimensional field theory.
	\newblock {\em Commun. Math. Phys.} \textbf{299} (2010), 163--224.
	
	\bibitem{C11-GaiottoMooreNeitzke13_WallCrossingsHitchin}
	D.~Gaiotto, G.W.~Moore, and A.~Neitzke.
	\newblock Wall-crossing, {H}itchin systems, and the {WKB} approximation.
	\newblock {\em Adv. Math.} \textbf{234} (2013), 239--403.
	
	\bibitem{C11-Gates84_SuperspaceFormulation_SpKaehler}
	S.~James Gates, Jr.
	\newblock Superspace formulation of new nonlinear sigma models.
	\newblock {\em Nuclear Phys. B} \textbf{238} (1984), 349--366.
	
	\bibitem{C11-Haydys_hk:08}
	A.~Haydys.
	\newblock {Hyper{K}{\"a}hler and quaternionic {K}{\"a}hler manifolds with
		{$S^1$}-symmetries}.
	\newblock {\em J. Geom. Phys.} \textbf{58} (2008), 293--306.
	
	\bibitem{C11-Haydys15_IsolSing_CMP}
	A.~Haydys.
	\newblock Isolated singularities of affine special {K}{\"a}hler metrics in two
	dimensions.
	\newblock {\em Commun. Math. Phys.} \textbf{340} (2015), 1231--1237.
	
	\bibitem{C11-Hitchin99_ModuliSpaceCxLagr}
	N.~Hitchin.
	\newblock The moduli space of complex {L}agrangian submanifolds.
	\newblock {\em Asian J. Math.} \textbf{3} (1999), 77--91.
	
	\bibitem{C11-Hitchin:87}
	N.~Hitchin.
	\newblock {The self-duality equations on a Riemann surface}.
	\newblock {\em Proc. Lond. Math. Soc., III. Ser.} \textbf{55} (1987), 59--126.
	
	\bibitem{C11-KazdanWarner74}
		J.~Kazdan and F.~Warner.
		\newblock {Curvature functions for compact {$2$}-manifolds}.
		\newblock {\em Ann. of Math. (2)} \textbf{99} (1974), 14--47.
	
	
	\bibitem{C11-KrausRothSugawa11_MetricsWithConicalSing}
	D.~Kraus, O.~Roth, and T.~Sugawa.
	\newblock Metrics with conical singularities on the sphere and sharp extensions
	of the theorems of {L}andau and {S}chottky.
	\newblock {\em Math. Z.} \textbf{267} (2011), 851--868.
	
	\bibitem{C11-Liouville53_SurEquation}
	J.~Liouville.
	\newblock Sur l'\'equation aux diff\'erences partielle
	$\frac{d^2\log\lambda}{dudv}\pm\frac{\lambda}{2a^2} = 0$.
	\newblock {\em J. Math. Pures Appl. $1^{re}$ s\'erie} 
         \textbf{18} (1853), 71--72.
	
	\bibitem{C11-Lu99_NoteSpecialKaehler}
	Zh.~Lu.
	\newblock A note on special {K}\"ahler manifolds.
	\newblock {\em Math. Ann.} \textbf{313} (1999), 711--713.
	
	\bibitem{C11-MaciaSwann15_TwistGeomCmap}
	O.~Macia and A.~Swann.
	\newblock Twist geometry of the c-map.
	\newblock {\em Commun. Math. Phys.} \textbf{336} (2015), 1329--1357.
	
	\bibitem{C11-MazzeoEtAl15_RicciFlow}
	R.~Mazzeo, Y.~Rubinstein, and N.~Sesum.
	\newblock Ricci flow on surfaces with conic singularities.
	\newblock {\em Anal. PDE} \textbf{8} (2015), 839--882.
	
	\bibitem{C11-MSWW16_EndsOfModuliSpace}
	R.~Mazzeo, J.~Swoboda, H.~Weiss, and F.~Witt.
	\newblock Ends of the moduli space of {H}iggs bundles.
	\newblock {\em Duke Math. J.} \textbf{165} (2016), 2227--2271.
	
	\bibitem{C11-McOwen93_PrescribedCurvature}
	R.~McOwen.
	\newblock Prescribed curvature and singularities of conformal metrics on
	{R}iemann surfaces.
	\newblock {\em J. Math. Anal. Appl.} \textbf{177} (1993), 287--298.
	
	\bibitem{C11-Neitzke14_NotesNewConstrHK}
	A.~Neitzke.
	\newblock Notes on a new construction of hyperkahler metrics.
	\newblock In {\em Homological mirror symmetry and tropical geometry}, 
         Springer, Cham, 2014, pp. 351--375.
	
	\bibitem{C11-OoguriVafa96_SummingupDirInst}
	H.~Ooguri and C.~Vafa.
	\newblock Summing up {D}irichlet instantons.
	\newblock {\em Phys. Rev. Lett.} \textbf{77} (1996), 3296--3298.
	
	\bibitem{C11-Picard1893_Delequation}
	E.~{Picard}.
	\newblock {De l'\'equation $\Delta U = ke^u$ sur une surface de Riemann
		ferm\'ee.}
	\newblock {\em J. de Math.} \textbf{9} (1893), 273--291.
	
	\bibitem{C11-Troyanov91_PrescribingCurvatureCmpt}
	M.~Troyanov.
	\newblock Prescribing curvature on compact surfaces with conical singularities.
	\newblock {\em Trans. Amer. Math. Soc.} \textbf{324} (1991), 793--821.
	
\end{thebibliography}

\def\cprime{$'$}

\end{document}